\documentclass[oneside,10pt]{amsart}
\usepackage{enumerate, hyperref}
\usepackage{latexsym, bbm, amssymb, amsmath, wasysym, mathtools}
\usepackage[round]{natbib}
\usepackage{setspace}
\usepackage{paralist}
\usepackage{ifpdf}
\usepackage[capposition=top]{floatrow}

\usepackage{tikz}
\usetikzlibrary{patterns,snakes,shapes}

\usepackage{accents}

\makeatletter
\def\subsection{\@startsection{subsection}{1}%
  \z@{.5\linespacing\@plus.7\linespacing}{.3\linespacing}%
  {\normalfont\scshape}}
\newcommand{\eqnum}{\refstepcounter{equation}\textup{\tagform@{\theequation}}}
\makeatother

\theoremstyle{plain}
\newtheorem{theorem}{Theorem}

\newtheorem{lemma}[theorem]{Lemma}
\newtheorem{proposition}[theorem]{Proposition}
\theoremstyle{definition}


\allowdisplaybreaks[1]

\setlength{\marginparwidth}{1.4in}
\let\oldmarginpar\marginpar
\renewcommand\marginpar[1]{\-\oldmarginpar[\raggedleft\footnotesize #1]%
{\raggedright\footnotesize #1}}


\newcommand{\R}{\mathbb{R}}


\newcommand{\E}{\mathbb{E}}
\newcommand{\F}{\mathcal{F}}

\newcommand{\hhat}{\widehat h}
\newcommand{\vhat}{\widehat v}



\newcommand{\1}{\mathbbm{1}}

\DeclarePairedDelimiter\floor{\lfloor\,}{\,\rfloor}


\begin{document}

\title[]%
{An Adaptive $\boldsymbol{O(\log n)}$-Optimal Policy for the \\Online Selection of a Monotone Subsequence \\from a Random Sample}
\author[]
{Alessandro Arlotto, Yehua Wei, and Xinchang Xie}

\thanks{A. Arlotto:  The Fuqua School of Business, Duke University, 100 Fuqua Drive, Durham, NC, 27708.
Email address: \texttt{alessandro.arlotto@duke.edu}}

\thanks{Y. Wei: Carroll School of Management, Boston College, 140 Commonwealth Avenue, Chestnut Hill, MA 02467.
Email address: \texttt{yehua.wei@bc.edu}}

\thanks{X. Xie: The Fuqua School of Business, Duke University, 100 Fuqua Drive, Durham, NC, 27708.
Email address: \texttt{xinchang.xie@duke.edu}}

\begin{abstract}
        Given a sequence of $n$ independent random variables with common continuous distribution,
        we propose a simple adaptive online policy that selects a monotone increasing subsequence.
        We show that the expected number of monotone increasing selections made by such a policy
        is within $O(\log n)$ of optimal.
        Our construction provides a direct and natural way for proving the $O(\log n)$-optimality
        gap. An earlier proof of the same result made
        crucial use of a key inequality of \citet{BruDel:SPA2001} and
        of de-Poissonization.


        \bigskip

        \noindent {\sc Key Words.}
        monotone subsequence, online selection, adaptive policy, Markov decision problem,
        dynamic programming.

        \bigskip

        \noindent {\sc Mathematics Subject Classification (2010).}
        Primary: 60C05, 60G40, 90C40; Secondary:  90C27, 90C39.

\end{abstract}

\date{first version: May 12, 2016.
This version: December 18, 2016.
}

\maketitle


\section{Introduction}

In the problem of \emph{online} selection of a \emph{monotone increasing} subsequence,
a decision maker observes sequentially a sequence of independent non-negative random variables $X_1, X_2, \ldots$
with common continuous distribution $F$ and seeks to construct a monotone subsequence
\begin{equation}\label{eq:monotone-subsequence}
X_{\tau_1} \leq  X_{\tau_2} \leq \cdots \leq  X_{\tau_j}
\end{equation}
where the indices $1 \leq \tau_1 < \tau_2 < \cdots < \tau_j$ are stopping times
with respect to the $\sigma$-fields $\F_i = \sigma\{X_1, X_2, \ldots, X_i \}$, $1 \leq i <\infty $,
and the trivial $\sigma$-field $\F_0$.
Since the indices are required to be possible values of stopping times,
all selection/rejection decisions are terminal.
That is, if the decision maker chooses not to select the value $X_{i}$ at time $i$,
then that value is lost forever.
Similarly, if $X_{i}$ is selected at time $i$, then
that selection cannot be changed in the future.
In general, the stopping times can be chosen
to optimize different objective functions,
and two main approaches have been considered in the literature.
In the first, the decision maker seeks to maximize
the expected number of selected elements when $n$ are sequentially revealed
\citep{SamSte:AP1981}.
In contrast, in the second approach the decision maker's objective is to minimize
the expected time it takes to construct a monotone subsequence with $n$ elements
\citep{ArlottoMosselSteele:RSA2016}.
Here, we confine our attention to the first --- more classical --- approach.

We then call a sequence of stopping times $1 \leq \tau_1 < \tau_2 < \cdots < \tau_j \leq n$
such that \eqref{eq:monotone-subsequence} holds
a \emph{feasible policy}, and we denote the set of all such policies
by $\Pi(n)$.
For any $\pi \in \Pi(n)$, we then let $L_n(\pi)$ be the random variable that
counts the number of selections made by policy $\pi$ for the sample $\{X_1,X_2, \ldots, X_n\}$.
That is,
$$
L_n(\pi)=\max \{j: X_{\tau_1} \leq  X_{\tau_2}\leq \cdots \leq  X_{\tau_j} \text{ where } 1\leq \tau_1 < \tau_2 < \cdots < \tau_j \leq n \}.
$$
\citet{SamSte:AP1981} first studied this selection problem and found that for each $n \geq 1$
there is a unique policy $\pi^*_n \in \Pi(n)$ such that
\begin{equation}\label{eq:uniquepolicy}
\E[L_n(\pi^*_n)] = \sup_{\pi \in \Pi(n)} \E[L_n(\pi)],
\end{equation}
and for such optimal policies one has the asymptotic estimate
\begin{equation}\label{eq:SSasymp}
\E[L_n(\pi^*_n)] \sim ( 2n )^{1/2} \quad \text{as } n \rightarrow \infty.
\end{equation}

Over the last few decades the understanding of policy $\pi^*_n$
has substantially evolved.
For instance, by formulating \eqref{eq:uniquepolicy} as a finite-horizon Markov
decision problem one sees that the optimal policy $\pi^*_n$ is characterized
by time and state dependent acceptance intervals.
Furthermore, the asymptotic estimate \eqref{eq:SSasymp} was refined with
the much tighter bounds
\begin{equation}\label{eq:ELn-optimal-bounds}
(2n)^{1/2} - O(\log n) \leq \E[L_n(\pi^*_n)] \leq (2n)^{1/2} \quad \quad \text{ as } n \rightarrow \infty.
\end{equation}

The upper bound in \eqref{eq:ELn-optimal-bounds} was first discovered by \citet{BruRob:AAP1991} while studying
the maximal number of elements of a random sample whose sum is less than a specified value.
The analysis was instigated by the work of \citet{CofFlaWeb:AAP1987},
and it now represents one of the early steps into the domain of
resource-dependent branching processes \citep[see, e.g.,][]{BrussDuerinckx:AOAP2015}.
The result of \citet{BruRob:AAP1991} is actually quite rich;
recent extensions and applications are discussed in \citet{Steele:MA2016}.
The upper bound in \eqref{eq:ELn-optimal-bounds} also appeared in
\citet{Gne:JAP1999} who considered the sequential selection of a monotone increasing subsequence from
a random sample with random size.

The $O(\log n)$ lower bound in \eqref{eq:ELn-optimal-bounds} is much more recent.
It first appeared in the work of \citet{BruDel:SPA2001}
who studied the mean-optimal sequential selection of a monotone increasing
subsequence when the observations $X_1, X_2, \ldots$ are revealed
at the arrival epochs of a unit-rate Poisson process on $[0,n]$.
While the \citet{BruDel:SPA2001} result provides compelling evidence that a similar
bound should also hold for the discrete-time formulation we
consider here --- i.e. the formulation in which the observations are revealed at the times $1, 2, \ldots, n$ --- the sequential nature of the two selection processes
makes the result of \citet{BruDel:SPA2001} not immediately applicable.
The connection between the continuous-time formulation of \citet{BruDel:SPA2001}
and the discrete-time optimization \eqref{eq:uniquepolicy}
was then argued by \citet{ArlNguSte:SPA2015} who
used the concavity of the map $n \mapsto \E[L_n(\pi^*_n)]$
and the $O(\log n)$-bound of \citet{BruDel:SPA2001}
to ultimately confirm the lower bound in \eqref{eq:ELn-optimal-bounds}.

After a careful analysis, \citet{BruDel:SPA2004} proved that the mean-optimal number of monotone increasing selections
with Poisson-many observations is asymptotically normal after
centering around $(2n)^{1/2}$ and scaling by $3^{-1/2}(2n)^{1/4}$.
\citet{ArlNguSte:SPA2015} showed that the same
asymptotic limit also holds for the discrete-time problem with $n$
observations so, in summary, we now know that
\begin{equation}\label{eq:ANS-CLT}
\frac{ 3^{1/2}  \{ L_n(\pi^*_n) - ( 2 n )^{1/2} \} }{ (2 n)^{1/4}} \Longrightarrow N(0,1), \quad \quad \text{as } n \rightarrow \infty.
\end{equation}

However, the analyses of \citet{BruDel:SPA2001,BruDel:SPA2004}
and \citet{ArlNguSte:SPA2015} do not address
whether there is a simple \emph{adaptive} online policy --- i.e., a policy that
depends on the value of the last selection
and on the number of observations that are yet to be seen ---
that is $O(\log n)$ optimal.
The works of \citet{RheTal:JAP1991} and \citet{ArlottoSteele:2011}
tell us that the best non-adaptive policy is $O(n^{1/4})$ optimal,
but this optimality gap is too crude.
For instance, the expected number of monotone increasing selections made by the best non-adaptive policy
cannot even be used to center the random variable $L_n(\pi^*_n)$
around $( 2 n )^{1/2}$ in the weak law \eqref{eq:ANS-CLT}.

In this paper, we construct a \emph{simple adaptive online policy} $\widehat \pi_n$ that is
$O(\log n)$ optimal.
The policy is characterized by a sequence of functions $\hhat_n, \hhat_{n-1}, \ldots, \hhat_1$
such that if the value of the last selection up to and including time $i$ is $s$
and if $k= n-i$ observations remain to be seen
then the value $X_{i+1}$ is selected if and only if $X_{i+1}$ falls in the \emph{acceptance interval} $[s , \hhat_{n-i}(s)]$.
In terms of the stopping times, the policy $\widehat \pi_n$ corresponds to setting
$\widehat \tau_0 = 0$, $X_{\widehat \tau_0}=0$, and then defining the stopping times
$\widehat\tau_1 < \widehat\tau_2 < \cdots < \widehat\tau_j$
recursively as
$$
\widehat\tau_j = \min \{ \widehat\tau_{j-1} < i \leq n: X_i \in [X_{\widehat \tau_{j-1}}, \hhat_{n-i+1}(X_{\widehat \tau_{j-1}})]\}
\quad \text{for } 1 \leq j \leq n,
$$
with the convention that if the set of indices on the right-hand side is empty, then $\widehat \tau_j = \infty$.
The random variable $L_n (\widehat \pi_n)$ then denotes the number of monotone increasing selections
made by policy $\widehat \pi_n$,
and the expected value of $L_n (\widehat \pi_n)$ satisfies the
two bounds given in the next theorem.

\begin{theorem}[$O(\log n)$-Optimal Policy]\label{th:pi-hat-gap}
For each $n\geq 1$, there is a simple adaptive online policy $\widehat \pi_n$ such that
\begin{equation}\label{eq:ELn-hat-bounds}
(2n)^{1/2} - 2\{ \log (n) + 1\} \leq \E[L_n(\widehat \pi_n)] \leq \E[L_n(\pi^*_n)] \leq (2n)^{1/2}.
\end{equation}
\end{theorem}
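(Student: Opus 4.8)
The plan is to get the two upper inequalities for free and reduce the whole theorem to a lower bound on the value of a single, explicitly chosen policy. By the probability integral transform I may assume throughout that the $X_i$ are uniform on $[0,1]$, since applying the continuous map $F$ preserves both the order relation in \eqref{eq:monotone-subsequence} and the filtration. The rightmost inequality $\E[L_n(\pi^*_n)] \le (2n)^{1/2}$ is the known upper bound recorded in \eqref{eq:ELn-optimal-bounds}, and the middle inequality $\E[L_n(\widehat \pi_n)] \le \E[L_n(\pi^*_n)]$ is immediate from the optimality \eqref{eq:uniquepolicy} of $\pi^*_n$; so everything reduces to the leftmost bound. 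Writing $\vhat_k(s)$ for the expected number of selections that $\widehat \pi_n$ makes when $k$ observations remain and the last accepted value is $s$, the terminal nature of the decisions yields the one-step recursion
$$
\vhat_k(s) = \int_s^{\hhat_k(s)}\bigl(1 + \vhat_{k-1}(x)\bigr)\,dx + \bigl(1 - (\hhat_k(s)-s)\bigr)\,\vhat_{k-1}(s), \qquad \vhat_0 \equiv 0,
$$
and the target quantity is $\E[L_n(\widehat \pi_n)] = \vhat_n(0)$.

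To define the policy I would use the asymptotically correct surrogate value $\psi_k(s) := \sqrt{2k(1-s)}$, suggested by the heuristic that with room $1-s$ and $k$ draws one expects about $\sqrt{2k(1-s)}$ further selections. I take $\hhat_k$ to be the one-step-lookahead threshold associated with $\psi_{k-1}$, i.e.\ the point at which the surrogate drops by exactly one unit,
$$
\sqrt{1-\hhat_k(s)} = \Bigl(\sqrt{1-s} - \tfrac{1}{\sqrt{2(k-1)}}\Bigr)_+, \qquad \hhat_1(s) := 1,
$$
which makes $\widehat \pi_n$ a genuinely simple and explicit adaptive policy depending only on $s$ and $k=n-i$.

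The key structural step, and the one that makes the argument ``direct and natural,'' is that the gap between $\vhat_k$ and the surrogate propagates without amplification. Let
$$
(\mathcal{T}\psi)_k(s) := \max_{h\ge s}\Bigl\{\int_s^h\bigl(1+\psi_{k-1}(x)\bigr)\,dx + \bigl(1-(h-s)\bigr)\psi_{k-1}(s)\Bigr\},
$$
whose maximizer is exactly $\hhat_k(s)$ by construction, and set $\delta_k(s) := \psi_k(s) - (\mathcal{T}\psi)_k(s)$, $\bar\delta_k := \sup_s\delta_k(s)$, and $D_k := \sup_s(\psi_k(s)-\vhat_k(s))$. Subtracting the recursion for $\vhat_k(s)$ from $(\mathcal{T}\psi)_k(s) = \psi_k(s) - \delta_k(s)$, both of which use the same threshold $\hhat_k(s)$, expresses $\psi_k(s)-\vhat_k(s)$ as $\delta_k(s)$ plus a probability average of $\psi_{k-1}-\vhat_{k-1}$ over points of $[s,1]$, with weight $\hhat_k(s)-s$ on the acceptance interval and weight $1-(\hhat_k(s)-s)$ at $s$. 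Since each such value is at most $D_{k-1}$ and the weights sum to one, this gives the non-expansive estimate $D_k \le D_{k-1} + \bar\delta_k$, and hence $D_n \le \sum_{k=1}^n \bar\delta_k$ because $D_0 = 0$.

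It then remains to bound the one-step defects $\bar\delta_k$. A direct calculation with $\psi_k(s)=\sqrt{2k(1-s)}$ shows that at the maximizer the surrogate reproduces the Bellman value up to a term of order $1/k$; in the interior regime $\sqrt{1-s}\ge 1/\sqrt{2(k-1)}$ one finds $\bar\delta_k \le 1/\{6(k-1)\}$ up to lower-order corrections, so that $\sum_{k=1}^n\bar\delta_k$ is dominated by a harmonic sum and stays comfortably below $2\{\log(n)+1\}$, and evaluating $D_n$ at $s=0$, $k=n$ delivers $\vhat_n(0)\ge \sqrt{2n}-2\{\log(n)+1\}$. I expect the main obstacle to be making this defect bound genuinely uniform in $s$: one must separately treat the truncation regime of very small room $1-s\le 1/\{2(k-1)\}$, where the maximizer is pinned at $h=1$ and the closed form of $(\mathcal{T}\psi)_k$ changes, together with the first stages $k=1,2$ where $\psi_{k-1}$ degenerates. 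Verifying that these edge contributions add only an $O(1)$ amount is precisely what pins down the constant multiplying the logarithm.
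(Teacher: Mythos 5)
Your skeleton coincides with the paper's: the surrogate $\psi_k(s)=\{2k(1-s)\}^{1/2}$, the residual $\psi_k-\vhat_k$, the non-expansive propagation $D_k\le D_{k-1}+\bar\delta_k$ (this is exactly the paper's inequality \eqref{eq:residual-function-bound}), and the reduction of the theorem to a per-step defect bound $\bar\delta_k=O(1/k)$ (the paper's Proposition \ref{pr:delta-k-bound}), followed by a harmonic sum. Where you genuinely depart is in the threshold, and hence in the policy witnessing the existence claim: you take the exact one-step-lookahead maximizer for the surrogate, $\sqrt{1-\hhat_k(s)}=\bigl(\sqrt{1-s}-[2(k-1)]^{-1/2}\bigr)_+$, whereas the paper deliberately rounds this to $\hhat_k(s)=\min\{s+[2k^{-1}(1-s)]^{1/2},1\}$ (see the discussion around \eqref{eq:approximate-equation}). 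That rounding is what forces the paper into the monotonicity analysis of Lemma \ref{lm:monotonicity-delta-k} and the power-series inequalities of Lemmas \ref{lm:infinite-series-inequality} and \ref{lm:coefficients-difference}; with the exact maximizer none of that is needed, because the defect is a two-line closed form. The one real gap in your write-up is that you assert this computation rather than perform it, and your hedges (``up to lower-order corrections,'' ``I expect the main obstacle\dots'') sit precisely on the step that carries the whole constant. It does go through, uniformly in $s$ and with no corrections: writing $u=\sqrt{1-s}$ and $c=[2(k-1)]^{-1/2}$ and substituting $t=\sqrt{1-x}$ in the integral, one finds for $k\ge 2$ in the interior regime $u\ge c$ that
$$
\delta_k(s)=u\bigl(\sqrt{2k}-\sqrt{2(k-1)}-c\bigr)+\tfrac{1}{3}c^2\le \tfrac{1}{6(k-1)},
$$
since $\sqrt{2k}-\sqrt{2(k-1)}\le c$, while in the truncated regime $u\le c$ one gets
$$
\delta_k(s)=u\bigl(\sqrt{2k}-\sqrt{2(k-1)}\bigr)-u^2\bigl(1-\tfrac{u}{3c}\bigr)\le c^2=\tfrac{1}{2(k-1)};
$$
together with $\bar\delta_1=1/2$ (attained at $u=2^{-1/2}$) this sums to at most $\tfrac12+\tfrac12\{\log(n-1)+1\}$, comfortably below $2\{\log(n)+1\}$. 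In short: same decomposition as the paper, a different (exactly re-optimized) threshold, and a defect estimate that, once written out, is considerably shorter than the paper's Sections \ref{se:monotonicity-delta-k-conservative} and \ref{se:monotonicity-delta-k-greedy} --- but you must actually write it out, since as submitted the decisive bound is only claimed.
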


We discuss the structure of policy $\widehat \pi_n$ and of the functions $\{\hhat_k: 1 \leq k <\infty\}$
in Section \ref{se:pi-hat}, and then we turn to the proof of Theorem \ref{th:pi-hat-gap}.
The upper bound in \eqref{eq:ELn-hat-bounds} immediately follows from \eqref{eq:ELn-optimal-bounds}
and the optimality of policy $\pi^*_n$, but
our analysis requires a generalization of the upper bound \eqref{eq:ELn-optimal-bounds} which we study in Section \ref{se:upper-bound}.
The proof of the lower bound \eqref{eq:ELn-hat-bounds} then follows in Sections \ref{se:rediduals}
and \ref{se:proof-lower-bound-completion}.
Finally, in Section \ref{se:conclusions} we make concluding remarks
and underscore some open problems.

\section{Policy $\widehat \pi_n$ and its Value Function}\label{se:pi-hat}

For any feasible policy $\pi$ and any continuous distribution $F$,
we see that the number of selections made by  $\pi$
for the sample $\{X_1, X_2, \ldots, X_n\}$ is unchanged if
we replace each $X_i$ by its monotone transformation $F^{-1}(X_i)$.
Thus, we can assume without loss of generality that
the $X_i$'s are uniformly distributed on $[0,1]$.
Next, we let
\begin{equation}\label{eq:h-hat}
\hhat_k(s) = \min\left\{  s + [2k^{-1}(1-s)]^{1/2} ,\, 1 \right\} \quad \quad \text{ for all } s \in [0,1] \text{ and all } k \geq 1,
\end{equation}
and we use the sequence of functions $\{\hhat_k: 1 \leq k < \infty\}$ to construct
appropriate acceptance intervals.
Specifically, if $s$ denotes the value of the last selection when $k$
observations are yet to be seen and $x$ is the $k$-to-last presented value,
then $x$ is selected as element of the subsequence that is under construction
if and only if $x \in [s, \hhat_k(s)]$.

We now define the \emph{critical value} $s_k = \max\{1 - 2k^{-1},0\}$ and we note that for
$s \in [0, s_k]$ the decision maker is \emph{conservative} and selects the $k$-to-last value $x$ if and only if it is within
$\{2k^{-1}(1-s)\}^{1/2}$ of the most recent selection $s$.
On the other hand, if $s \in [s_k,1]$ the decision maker is \emph{greedy} and accepts any $k$-to-last value $x$ that
is larger than the most recent selection $s$.

If $k$ denotes the number of observations that are yet to be seen
and $s$ is the value of the most recent selection,
then we let  $\vhat_k(s)$ denote the expected number of monotone increasing selections
made by the acceptance interval policy characterized by the functions $\hhat_k, \hhat_{k-1}, \ldots, \hhat_1$.
The functions $\{\vhat_k : 1 \leq k \leq n\}$ are the value functions associated
with policy $\widehat \pi_n$ and they can be obtained recursively.
Specifically, if $\vhat_0(s) = 0$ for all $s \in [0,1]$, then for $k \geq 1$ we have the recursion
\begin{equation}\label{eq:v-hat}
	\widehat{v}_{k}(s)
    = \{ 1- \widehat{h}_k(s) + s \} \widehat{v}_{k-1}(s)
    + \int_s^{\widehat{h}_k(s)} \{ 1 + \widehat{v}_{k-1}(x) \}  \, dx.
\end{equation}

To see why this recursion holds, we condition on the $k$-to-last
uniform random value. With probability
$1- \widehat{h}_k(s) + s$ the newly presented value $x$ does not fall
in the acceptance interval $[s, \hhat_k(s)]$.
In this case no selection is made and we are left with $k-1$ remaining observations
and with the value of the most recent selection $s$ unchanged. This amounts to
an expected number of remaining selections equal to $\vhat_{k-1}(s)$ and
it justifies the first summand of our recursion \eqref{eq:v-hat}.
On the other hand, with probability $\widehat{h}_k(s) - s$ the newly presented
value $x$ falls in the acceptance interval, and we obtain a reward of one
for selecting $x$ plus the expected number of remaining selections over the next $k-1$ observations
when the value of the most recent selection changes to $x$.
Integrating this over all $x \in [s, \hhat_k(s)]$ gives us
the second summand of the recursive equation \eqref{eq:v-hat}.
The value functions $\{\vhat_k : 1 \leq k < \infty \}$  are all continuous on $[0,1]$ and
their behavior is well summarized by the following theorem.

\begin{theorem}[$\vhat_k$ Bounds]\label{th:lower-bound}
For all $k \geq 1$ and all $s \in [0,1]$ one has that
\begin{equation}\label{eq:main-bounds}
\{2k(1-s)\}^{1/2} - 2 \{ \log( k ) + 1 \} \leq \vhat_k (s) \leq \{2k(1-s)\}^{1/2}.
\end{equation}
\end{theorem}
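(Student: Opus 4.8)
The plan is to prove the two sides of \eqref{eq:main-bounds} by quite different means, writing $g_k(s)=\{2k(1-s)\}^{1/2}$ for the leading term throughout.

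\emph{Upper bound.} I would not argue this one by induction; instead I would lift the fixed-sample estimate $\E[L_m(\pi^*_m)]\le (2m)^{1/2}$ from \eqref{eq:ELn-optimal-bounds} to the conditional, pointwise-in-$s$ statement by a conditioning argument. Starting from a most recent selection $s$ with $k$ observations left, only the observations that fall in $[s,1]$ can ever be selected, and conditionally on there being $M$ of them they are independent, uniform on $[s,1]$, and presented in uniformly random relative order; hence the selections still available to \emph{any} feasible rule form an online monotone-subsequence instance on $M$ points, and the irrelevant values below $s$ cannot increase the attainable length. Since $\vhat_k(s)$ is the value of one particular feasible policy, it is at most the optimal conditional value, so $\vhat_k(s)\le \E[L_M(\pi^*_M)]\le \E[(2M)^{1/2}]$. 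With $M\sim\mathrm{Binomial}(k,1-s)$ and $\E[M]=k(1-s)$, Jensen's inequality for the concave map $m\mapsto(2m)^{1/2}$ gives $\E[(2M)^{1/2}]\le(2\E[M])^{1/2}=g_k(s)$, the asserted upper bound. This is the generalization of \eqref{eq:ELn-optimal-bounds} promised in the introduction.

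\emph{Lower bound: the reduction.} Here I would induct on $k$, exploiting two structural features of the one-step map $\mathcal{R}_k\colon v\mapsto\vhat_k$ defined by the right-hand side of \eqref{eq:v-hat}. Writing $\delta=\hhat_k(s)-s$ and recalling from \eqref{eq:h-hat} that $s\le\hhat_k(s)\le 1$, so that $0\le\delta\le 1-s\le 1$, one checks that $\mathcal{R}_k[v](s)=(1-\delta)v(s)+\delta+\int_s^{s+\delta}v(x)\,dx$ is (i) monotone, since the coefficient $1-\delta$ and the integral are both nonnegative, and (ii) shift-equivariant, $\mathcal{R}_k[v-c]=\mathcal{R}_k[v]-c$ for every constant $c$, because the weight $(1-\delta)$ at $s$ and the Lebesgue weight $\delta$ on $[s,s+\delta]$ sum to one. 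Put $c_k=2\{\log(k)+1\}$, so that the target is $\vhat_k\ge g_k-c_k$. Applying (i)--(ii) to the inductive hypothesis $g_{k-1}-c_{k-1}\le\vhat_{k-1}$ yields $\vhat_k=\mathcal{R}_k[\vhat_{k-1}]\ge\mathcal{R}_k[g_{k-1}]-c_{k-1}$. Since $c_k-c_{k-1}=2\log\{k/(k-1)\}$, the induction closes once I establish the purely deterministic per-step residual bound
\begin{equation*}
\mathcal{R}_k[g_{k-1}](s)\ \ge\ g_k(s)-2\log\Big(\frac{k}{k-1}\Big)\qquad\text{for all }s\in[0,1],\ k\ge 2,
\end{equation*}
the base case $k=1$ being the direct computation $\vhat_1(s)=1-s\ge\sqrt{2(1-s)}-2=g_1(s)-c_1$.

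\emph{The residual inequality and the main obstacle.} What remains is to control $E_k(s):=\mathcal{R}_k[g_{k-1}](s)-g_k(s)$, and this is where the genuine work lies. I would split at the critical value $s_k=\max\{1-2k^{-1},0\}$. In the greedy regime $s\ge s_k$ one has $\delta=1-s$, and $E_k$ collapses to an explicit one-variable expression in $a=1-s\in[0,2k^{-1}]$. In the conservative regime $s\le s_k$ the substitution $t=g_k(s)=k\delta\in[2,(2k)^{1/2}]$ turns $E_k$ into an expression in $t$ alone, with $g_{k-1}(s)=\rho t$, $g_{k-1}(s+\delta)=\rho\{t(t-2)\}^{1/2}$, and $\int_s^{s+\delta}g_{k-1}=\tfrac{\rho}{3k}[\,t^3-\{t(t-2)\}^{3/2}\,]$, where $\rho=\{(k-1)/k\}^{1/2}$. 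In each regime the claim reduces to a single-variable inequality to be checked by elementary calculus. The delicate point is that $g_k$ is engineered to solve the recursion \eqref{eq:v-hat} to leading order, so the $O(k^{-1})$ parts of $E_k$ cancel and one must pin down the sign of the residual precisely; the saving grace for the \emph{lower} bound is that the target $-2\log\{k/(k-1)\}\approx -2/k$ leaves real slack (the residual itself is an order of magnitude smaller), so bounding the concave integral below by its chord and the square-root difference $g_{k-1}-g_k$ by a first-order Taylor term should comfortably do it. I expect the conservative regime, with its $\{t(t-2)\}^{3/2}$ contribution, to be the messiest book-keeping, and securing all estimates \emph{uniformly} in $k$ and $s$ to be the part demanding the most care.
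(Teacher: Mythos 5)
Your overall architecture is sound, and the two halves deserve different verdicts. For the upper bound you take a genuinely different route from the paper: conditioning on the number $M\sim\mathrm{Binomial}(k,1-s)$ of observations exceeding $s$, invoking the known bound $\E[L_m(\pi^*_m)]\le(2m)^{1/2}$ from \eqref{eq:ELn-optimal-bounds}, and finishing with Jensen. That is a legitimate derivation of the generalized upper bound, though it imports the cited $s=0$ result rather than reproving it; the paper instead gives a self-contained argument in Section \ref{se:upper-bound} (two applications of Cauchy--Schwarz plus a telescoping bound on $\E[M_i]$) that applies to every acceptance-interval policy. For the lower bound, your reduction is essentially identical to the paper's: your operator $\mathcal{R}_k$ is monotone and shift-equivariant, $\mathcal{R}_k[g_{k-1}](s)=g_k(s)-\delta_k(s)$ with $\delta_k$ exactly as in \eqref{eq:delta-k}, and your induction is the paper's maximal-residual telescoping \eqref{eq:residual-alternative}--\eqref{eq:max-residual} in different clothing (your per-step budget $2\log\{k/(k-1)\}$ is even slightly more generous than the paper's $2/k$ in \eqref{eq:delta-k-bound}).

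The genuine gap is that you stop precisely where the real work begins: the uniform-in-$s$ inequality $\delta_k(s)\le 2\log\{k/(k-1)\}$ is asserted, not proved. This single-variable estimate is the technical heart of the paper (Proposition \ref{pr:delta-k-bound} and all of Sections \ref{se:monotonicity-delta-k-conservative}--\ref{se:monotonicity-delta-k-greedy}), and it is not a routine calculus check. In the conservative regime the two pieces of $\delta_k$, namely $[2k(1-s)]^{1/2}-[2(k-1)(1-s)]^{1/2}$ and the integral term, are each of order $\{(1-s)/k\}^{1/2}$ --- i.e.\ $\Theta(k^{-1/2})$ for $s$ bounded away from $1$ --- and must cancel to precision $O(k^{-1})$; moreover the slack is only a constant factor, not ``an order of magnitude,'' since $\max_s\delta_k(s)$ is itself $\Theta(k^{-1})$ (the paper shows $\delta_k(s_k)\le 4/(3k)$ against a budget of $2/k$). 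A chord bound on the integral together with a first-order Taylor expansion of the square-root difference does not obviously land on the correct side of the inequality with the needed uniformity; the paper instead proves that $\delta_k$ is non-decreasing on $[0,s_k]$ (Lemma \ref{lm:monotonicity-delta-k}), which after differentiation reduces to the power-series coefficient comparison of Lemmas \ref{lm:infinite-series-inequality} and \ref{lm:coefficients-difference}, and only then evaluates $\delta_k$ at the single point $s_k$. Until you supply a complete proof of the per-step bound in both regimes, the lower half of \eqref{eq:main-bounds} remains unestablished.
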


Since policy $\widehat \pi_n$ is characterized by the thresholds $\hhat_n, \hhat_{n-1}, \ldots, \hhat_1$
and by the initial state $s=0$, one then has the equivalence
$$
\E[L_n(\widehat \pi_n)] = \vhat_n(0) \quad \quad \text{for all } n \geq 1.
$$
Hence, Theorem \ref{th:pi-hat-gap} is an immediate corollary of Theorem \ref{th:lower-bound},
but the upper bound \eqref{eq:main-bounds} is a refinement of \eqref{eq:ELn-optimal-bounds}
to an arbitrary initial state.
In fact the same upper bound holds for all feasible policies based on acceptance intervals, including
the optimal one.

The estimates in Theorem \ref{th:lower-bound}
also allow us to provide some intuition for our
choice of the threshold functions $\{ \hhat_k: 1 \leq k < \infty\}$
in \eqref{eq:h-hat}.
For every $k \geq 1$ and $s \in [0,1]$ the
threshold functions aim to balance the expected reward to-go
$\vhat_{k-1}(s)$ that one obtains when skipping the $k$-to-last
observation $x$, and the reward $1 + \vhat_{k-1}(x)$ that one
earns when selecting the $k$-to-last value $x$.
Since $\vhat_{k-1}(s) \approx \{2(k-1)(1-s)\}^{1/2}$,
one can solve the equation
\begin{equation}\label{eq:approximate-equation}
  \{2(k-1)(1-s)\}^{1/2} = 1 + \{2(k-1)(1-\widehat{x})\}^{1/2}
\end{equation}
to find the largest value of $x$ that makes selecting the current
value worthwhile.
Equation \eqref{eq:approximate-equation} then tells us that
\begin{equation*}
 \widehat{x} = s + [2(k-1)^{-1}(1-s)]^{1/2} - [2(k-1)]^{-1}
             \approx s + [2k^{-1}(1-s)]^{1/2} - [2k]^{-1},
\end{equation*}
so our choice \eqref{eq:h-hat} accounts for the first two terms of the
approximation of the solution of equation \eqref{eq:approximate-equation}.
The truncation in \eqref{eq:h-hat} then ensures that all the thresholds
$\{ \hhat_k: 1 \leq k < \infty\}$ are feasible.
At the end of the next section we use an optimization
argument to provide further intuition for
our choice of the threshold functions.

\section{A Refined Prophet Upper Bound}\label{se:upper-bound}

In this section, we prove the upper bound \eqref{eq:main-bounds} by showing that it holds for all policies
that are based on acceptance intervals.
The adaptive policy $\widehat \pi_n$ and the unique optimal policy $\pi^*_n$
both have this property.
The argument we provide draws substantially from the earlier analyses of \citet{Gne:JAP1999} and \citet{BruDel:SPA2001},
but it takes advantage of the flexibility that comes from allowing for an arbitrary initial state.
Specifically, here we assume that the first subsequence element can be selected only if it is larger than
an arbitrary value $s \in [0,1]$. In contrast, in the classical formulation one always takes the initial state $s=0$.

For any $k \geq 1$, an arbitrary acceptance interval policy $\pi_k$
is given by a sequence $h_k, h_{k-1}, \ldots, h_1$ of functions such that
$$
s \leq h_j(s) \leq 1 \quad \quad \text{for all } 1 \leq j \leq k \text{ and all } s \in [0,1].
$$
If $s$ denotes the initial state or the value of the last observation selected prior to being presented with
the $j$-to-last value $x$, then $x$ is selected if and only if $x \in [s, h_j(s)]$.
Next, for any $s \in [0,1]$ we set $M_0 = s$ and we let
$M_i$ denote the maximum between $M_0$ and the largest of the elements of the subsequence
that have been selected up to and including time $i$.
The number of selections from $\{X_1, X_2, \ldots, X_k\}$
when $M_0=s$  is then given by
the random variable
$$
L_k(\pi_k,s) = \sum_{i=1}^k \1(X_i \in [M_{i-1}, h_{k-i+1}(M_{i-1})]).
$$
If we now take expectations on both sides and
use the Cauchy-Schwarz inequality
to estimate the sum of the products $1 \cdot \E[ h_{k-i+1}(M_{i-1}) - M_{i-1} ]$
for $1 \leq i \leq k$, we obtain
\begin{align}\label{eq:CS-first-application}
\E[ L_k(\pi_k,s)]&=\sum_{i=1}^k 1 \cdot \E[ h_{k-i+1}(M_{i-1}) - M_{i-1} ] \\
&\leq k^{1/2} \bigg\{ \sum_{i=1}^k  \E[ h_{k-i+1}(M_{i-1}) - M_{i-1} ]^2 \bigg\}^{1/2}.\notag
\end{align}

The definition of $M_i$ as the maximum between $M_0$ and the largest
of the elements of the subsequence that have been selected up to and including time $i$ tells us that
\begin{equation*}
M_i =
\begin{cases}
M_{i-1} \quad &\text{if } X_i \notin [M_{i-1}, h_{k-i+1}(M_{i-1}) ]\\
X_i \quad &\text{if } X_i \in [M_{i-1}, h_{k-i+1}(M_{i-1})]
\end{cases}
\end{equation*}
and, because $X_i$ is uniformly distributed on the unit interval, we have the identity
$$
\E[M_i -M_{i-1}\, | \, \F_{i-1} ] = \int_{M_{i-1}}^{h_{k-i+1}(M_{i-1})} (x -  M_{i-1}) \, dx
= \frac{1}{2}\left( h_{k-i+1}(M_{i-1})- M_{i-1}\right)^2.
$$
Taking the total expectation then gives
\begin{equation*}
\E[\left( h_{k-i+1}(M_{i-1}) - M_{i-1}\right)^2] = 2 \{ \E[M_i] - \E[M_{i-1}] \},
\end{equation*}
so a second application of the Cauchy-Schwarz inequality implies the upper bound
$$
\E[ h_{k-i+1}(M_{i-1}) - M_{i-1}]^2 \leq  2 \{ \E[M_i] - \E[M_{i-1}] \}.
$$
If we now sum over $1 \leq i \leq k$ and recall that $\E[M_k]\leq 1$ and $M_0 = s$,
we obtain from telescoping that
\begin{equation}\label{eq:sum-exp-increment-squared-bound}
\sum_{i=1}^{k} \E[ h_{k-i+1}(M_{i-1}) - M_{i-1}]^2 \leq 2 \{ \E[M_k] - \E[M_{0}] \} \leq 2 (1-s).
\end{equation}
This last inequality and the bound in \eqref{eq:CS-first-application} finally
give us that
$$
\E[ L_k(\pi_k,s)] \leq \{2k(1-s)\}^{1/2} \quad \quad \text{for all } k \geq 1 \text{ and all } s \in [0,1]
$$
and complete the proof of the upper bound \eqref{eq:main-bounds}.

The same upper bound can also be obtained by formulating
a simple optimization problem that provides further insight into
our choice of the adaptive thresholds $\{\widehat{h}_k: 1 \leq k < \infty\}$.
We consider the optimization problem
\begin{align}\label{eq:optimization-problem}
w^* =  \max_{d_1, \ldots, d_k} & \sum_{i=1}^{k} d_i\\
            \text{s.t.}\,\,\, & \sum_{i=1}^{k} d_i^2 \leq 2(1-s),\notag
\end{align}
and we obtain from inequality \eqref{eq:sum-exp-increment-squared-bound} that $d_i = \E[h_{k-i+1}(M_{i-1}) - M_{i-1}]$ is a feasible solution.
This feasible solution has objective
$$
\sum_{i=1}^k \E[ h_{k-i+1}(M_{i-1}) - M_{i-1} ] = \E[ L_k(\pi_k,s)],
$$
so we have that the optimal objective value of \eqref{eq:optimization-problem}
is an upper bound for $\E[ L_k(\pi_k,s)]$, and that \eqref{eq:optimization-problem}
is a relaxation of the online monotone subsequence problem \eqref{eq:uniquepolicy}.

The optimal value of \eqref{eq:optimization-problem} can be easily estimated.
The problem has a linear objective function and convex constraints;
by the Karush--Kuhn--Tucker (KKT) conditions,
its optimal solution $(d_1^*, d_2^*, \ldots, d_k^*)$ is given by
$$
d_i^* = [2 k^{-1}(1-s)]^{1/2} \quad \quad \text{for all } 1 \leq i \leq k,
$$
and its optimal value is
$$
w^* = \{2 k (1-s)\}^{1/2} \quad \quad \text{for all } k \geq 1 \text{ and } s \in[0,1].
$$

It then follows that the adaptive thresholds
$\{\widehat{h}_k: 1 \leq k < \infty\}$ defined by \eqref{eq:h-hat}
are \emph{reoptimized} thresholds that use the optimal solution of the relaxation \eqref{eq:optimization-problem} for any given $k \geq 1$ and $s \in[0,1]$.
Of course, a small nuisance arises, and one needs to make sure
that the reoptimized thresholds are indeed feasible.
Since there are values $s \in [0,1]$ such that $s + [2 k^{-1}(1-s)]^{1/2}  > 1$,
one needs the truncation introduced in \eqref{eq:h-hat}
and separately consider the conservative and greedy regimes.

\section{Residual Functions and Residual Differences}\label{se:rediduals}

The upper bound \eqref{eq:main-bounds} tells us that $\widehat{v}_k(s) \leq \{2k(1-s)\}^{1/2}$
for all  $s \in [0,1]$ and all $ 1 \leq k < \infty$,
so if we consider the \emph{residual function}
\begin{equation}\label{eq:residual}
  r_k(s) = \{2k(1-s)\}^{1/2} - \widehat{v}_k(s)
  \quad \quad \text{for } s \in [0,1] \text{ and } 1 \leq k < \infty,
\end{equation}
we can obtain a recursion for $\widehat{v}_k$ that is equivalent to \eqref{eq:v-hat}.
Specifically, when we substitute the value function $\widehat v_{k-1}(y)$ with the difference
 $[2(k-1)(1-y)]^{1/2}  - r_{k-1}(y)$ for all $y \in [0,1]$ on the right-hand side of the recursive equation \eqref{eq:v-hat}
and rearrange, we obtain that
\begin{align}\label{eq:v-hat-second-recursion}
\widehat{v}_{k}(s)  = &  [2(k - 1)(1 - s)]^{1/2} \\
                      & - \int_s^{\widehat{h}_k(s)}  \{ [2(k - 1)(1 - s)]^{1/2} - [2(k - 1)(1 - x)]^{1/2} - 1 \}  \, dx \notag\\
                      & -  \{ 1- \widehat{h}_k(s) + s \} r_{k-1}(s) - \int_s^{\widehat{h}_k(s)}  r_{k-1}(x)   \, dx. \notag
\end{align}
Next, we define the function $\delta_k : [0,1] \rightarrow \R$ as
\begin{align}\label{eq:delta-k}
 \delta_{k}(s)=  & [2k(1-s)]^{1/2} - [2(k-1)(1-s)]^{1/2} \\
         & + \int_s^{\widehat{h}_k(s)} \{ [2(k-1)(1-s)]^{1/2} - [2(k-1)(1-x)]^{1/2} - 1\} \, dx, \notag
\end{align}
so when we plug-in the recursion \eqref{eq:v-hat-second-recursion} on the right-hand side of \eqref{eq:residual}
and use the definition of $\delta_k(s)$, we see that the residual function  \eqref{eq:residual} can be written as
\begin{equation}\label{eq:residual-alternative}
r_k(s) = \delta_{k}(s) + \{ 1- \widehat{h}_k(s) + s \} r_{k-1}(s) + \int_s^{\widehat{h}_k(s)}  r_{k-1}(x)   \, dx.
\end{equation}

For each $k \geq 1$, the residual function $r_k(s)$ is continuous and defined on a compact interval,
so if we maximize with respect to $s$ we find the \emph{maximal residual}
\begin{equation}\label{eq:max-residual}
  r_k =  \max_{0 \leq s \leq 1 } r_k(s)
  \quad \quad \text{for }  1 \leq k < \infty.
\end{equation}
If we now substitute the residual functions on the right-hand side of \eqref{eq:residual-alternative}
with their maximal value, we find the inequality
\begin{equation}\label{eq:residual-function-bound}
r_k(s) \leq \delta_{k}(s) + r_{k-1}  \quad \quad \text{for all } s \in [0,1] \text{ and all } 1 \leq k < \infty.
\end{equation}
This inequality implies that the residual difference $r_k(s) - r_{k-1}$ is bounded
above by $\delta_{k}(s)$, and we will shortly see that the behavior of the function $\delta_k$
changes in the conservative and greedy regimes.
Nevertheless, the function $\delta_k$ is appropriately bounded for each $k \geq 1$.

\begin{proposition}[Residual Differences Bound]\label{pr:delta-k-bound}
For each $k \geq 1$ one has that
\begin{equation}\label{eq:delta-k-bound}
r_{k}(s) - r_{k-1}  \leq \delta_k(s) \leq \frac{2}{k} \quad \quad \text{for all } s \in [0,1].
\end{equation}
\end{proposition}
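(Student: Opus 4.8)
The first inequality needs no work: rearranging the bound \eqref{eq:residual-function-bound}, namely $r_k(s)\le\delta_k(s)+r_{k-1}$, immediately gives $r_k(s)-r_{k-1}\le\delta_k(s)$ for every $s$. So the entire content of the proposition is the estimate $\delta_k(s)\le 2/k$. The plan is first to evaluate the integral in \eqref{eq:delta-k} in closed form. Writing $u=1-s$ and $w=1-\hhat_k(s)$, the substitution $y=1-x$ reduces the integrand to a multiple of $\sqrt u-\sqrt y$, and integrating $\sqrt{\,\cdot\,}$ yields
\[
\delta_k(s)=\big([2ku]^{1/2}-[2(k-1)u]^{1/2}\big)+\sqrt{2(k-1)}\Big(\tfrac13u^{3/2}-wu^{1/2}+\tfrac23w^{3/2}\Big)-(u-w).
\]
I would then split according to the two regimes from the text: the greedy case $\hhat_k(s)=1$ (so $w=0$), which holds for $s\ge s_k$, i.e.\ $u\le 2/k$; and the conservative case $\hhat_k(s)-s=[2u/k]^{1/2}$ (so $u-w=[2u/k]^{1/2}$), which holds for $s\le s_k$, i.e.\ $u\ge 2/k$. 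Note $s_1=0$, so $k=1$ falls entirely under the greedy case.

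In the greedy regime I would bound the two pieces separately. The first difference equals $\sqrt{2u}/(\sqrt k+\sqrt{k-1})\le (2/\sqrt k)/\sqrt k=2/k$, using $\sqrt{2u}\le 2/\sqrt k$ and $\sqrt k+\sqrt{k-1}\ge\sqrt k$; and the remainder $\tfrac13\sqrt{2(k-1)}u^{3/2}-u$ is $\le 0$, since it is equivalent to $[2(k-1)u]^{1/2}\le 3$, which holds because $[2(k-1)u]^{1/2}\le 2\sqrt{(k-1)/k}<2$. Adding the two gives $\delta_k(s)\le 2/k$.

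The conservative regime is the delicate one, and here separating the pieces is fatal: the first difference and the (negative) integral are each of order $k^{-1/2}$, while their sum is only $O(k^{-1})$, so the cancellation must be preserved. Setting $\rho=[2u/k]^{1/2}/u=[2/(ku)]^{1/2}\in(0,1]$ and $w=u(1-\rho)$, the two order-$k^{-1/2}$ terms combine exactly, and one is left with
\[
\delta_k(s)=\sqrt{2(k-1)u}\,\Big(u\,g(\rho)-\tfrac{1}{k+\sqrt{k(k-1)}}\Big),\qquad g(\rho)=\rho-\tfrac23+\tfrac23(1-\rho)^{3/2}.
\]
Since $u=2/(k\rho^2)$, clearing the positive factor $\sqrt{2(k-1)u}$ and multiplying by $k$ shows that $\delta_k(s)\le 2/k$ is exactly equivalent to $2g(\rho)/\rho^2\le \tfrac{k}{k+\sqrt{k(k-1)}}+\rho\sqrt{k/(k-1)}$. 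Because $k/(k+\sqrt{k(k-1)})\ge\tfrac12$ and $\sqrt{k/(k-1)}\ge 1$, it suffices to prove the $k$-free inequality $2g(\rho)/\rho^2\le \tfrac12+\rho$, equivalently $g(\rho)\le\tfrac14\rho^2+\tfrac12\rho^3$ on $(0,1]$. I would verify this last step by the substitution $t=\sqrt{1-\rho}$, which turns $g$ into $\tfrac13-t^2+\tfrac23t^3$ and the claim into a polynomial inequality on $[0,1]$ with a root at $t=1$, checked by elementary means.

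The main obstacle is precisely this cancellation in the conservative regime. Any argument that estimates the gap $[2ku]^{1/2}-[2(k-1)u]^{1/2}$ and the negative integral separately (for instance via the crude convexity bound $g(\rho)\le\rho^2/2$) loses a full factor $k^{1/2}$ and lands at $O(k^{-1/2})$ rather than $2/k$. The crux is therefore to combine the two terms into the single bracket above and to recognize that, after the substitution $u=2/(k\rho^2)$, the whole inequality collapses to one sharp, dimension-free bound on $g$; everything else is routine.
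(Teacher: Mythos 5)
Your proposal is correct, and for the conservative regime it takes a genuinely different route from the paper. The first inequality and the greedy case are handled exactly as in the text (the paper also bounds $(1-s)^{1/2}\{(2k)^{1/2}-[2(k-1)]^{1/2}\}$ by $2/k$ and observes the remaining term is non-positive). For $s\le s_k$, however, the paper does not manipulate $\delta_k$ algebraically: it proves that $\delta_k$ is \emph{non-decreasing} on $[0,s_k]$ by computing $\delta_k'$, reducing its sign to that of an auxiliary function $\gamma_k$, and establishing the latter via an infinite-series expansion of $[1-(2y)^{1/2}]^{1/2}$ together with a coefficient-difference lemma; it then evaluates $\delta_k$ at the endpoint $s_k$ to get the sharper constant $4/(3k)$ there. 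You instead integrate \eqref{eq:delta-k} in closed form, let the two $O(k^{-1/2})$ terms cancel exactly (I checked that $\delta_k(s)=\sqrt{2(k-1)u}\,\big(u\,g(\rho)-\tfrac{1}{k+\sqrt{k(k-1)}}\big)$ is right), and reduce the claim to the $k$-free inequality $g(\rho)\le\tfrac14\rho^2+\tfrac12\rho^3$. Your final step is only sketched, but it does close: with $t=\sqrt{1-\rho}$ the difference of the two sides becomes $\tfrac14(1-t^2)^2+\tfrac12(1-t^2)^3-\tfrac13+t^2-\tfrac23t^3=\tfrac{1}{12}(1-t)^3(6t^3+18t^2+15t+5)\ge0$ on $[0,1]$ (note the root at $t=1$ is a triple root, which is what makes the factorization work and reflects that the bound is asymptotically sharp as $\rho\to0$). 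Your correct observation that $k=1$ is entirely greedy also disposes of the degenerate $\sqrt{k/(k-1)}$. What each approach buys: yours is entirely elementary and avoids both calculus and power series, at the cost of losing the constant $4/(3k)$ and the monotonicity information on $[0,s_k]$; the paper's yields those extra facts but needs the three auxiliary lemmas.
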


The first inequality of \eqref{eq:delta-k-bound} is just a rearrangement of \eqref{eq:residual-function-bound},
so we focus on the second inequality.
Its proof is given in the next two
sections.
Section \ref{se:monotonicity-delta-k-conservative} studies the conservative regime, while
Section \ref{se:monotonicity-delta-k-greedy} deals with the greedy regime.

\subsection{The Function $\delta_k$ in the Conservative Regime}\label{se:monotonicity-delta-k-conservative}

We begin our analysis of the function $\delta_k$ in the conservative regime by stating
the following monotonicity lemma.

\begin{lemma}[Monotonicity of the $\delta$-Functions in the Conservative Regime]\label{lm:monotonicity-delta-k}
For each $k \geq 3$ and $s_k =  1 - 2k^{-1}$, the function $\delta_k: [0,1] \rightarrow \R$ defined by \eqref{eq:delta-k}
is non-decreasing on $[0, s_k]$. Moreover, we have that
\begin{equation}\label{eq:delta-k-bound-conservative}
\delta_k(s) \leq \delta_k(s_k) \leq \frac{4}{3k}  \quad \quad \text{for all } s \in [0, s_k].
\end{equation}
\end{lemma}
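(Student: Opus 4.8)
The plan is to treat the two assertions separately. Monotonicity on $[0,s_k]$ makes the first inequality in \eqref{eq:delta-k-bound-conservative} automatic, so the real work splits into (a) evaluating and bounding $\delta_k(s_k)$, and (b) proving $\delta_k'(s)\ge 0$. Before either, I would record that throughout the conservative regime the truncation in \eqref{eq:h-hat} is inactive: for $s\in[0,s_k]$ one has $\hhat_k(s)=s+[2k^{-1}(1-s)]^{1/2}\le 1$, with equality exactly at $s=s_k$, where $\hhat_k(s_k)=1$ and $1-s_k=2k^{-1}$.

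For (a), I would evaluate $\delta_k(s_k)$ directly. At $s=s_k$ the integral in \eqref{eq:delta-k} runs over $[s_k,1]$ and all radicals collapse: $[2k(1-s_k)]^{1/2}=2$ and $[2(k-1)(1-s_k)]^{1/2}=2\rho$ with $\rho:=((k-1)/k)^{1/2}$, while $\int_{s_k}^1[2(k-1)(1-x)]^{1/2}\,dx$ is an elementary $\tfrac32$-power integral. Collecting terms yields $\delta_k(s_k)=2(y-y^{1/2})+\tfrac{4}{3k}\rho$ with $y:=1-1/k$. Since $y^{1/2}\ge y$ on $[0,1]$ the first summand is $\le 0$, and $\rho\le 1$ bounds the second by $\tfrac{4}{3k}$, giving $\delta_k(s_k)\le \tfrac{4}{3k}$.

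Part (b) is the substantive step. I would differentiate \eqref{eq:delta-k} in $s$ via Leibniz's rule, keeping track of the fact that $s$ enters both the integrand (through the constant-in-$x$ term $[2(k-1)(1-s)]^{1/2}$) and the upper limit $\hhat_k(s)$. Writing $D(s)=\hhat_k(s)-s=[2k^{-1}(1-s)]^{1/2}$ and exploiting the exact relation $[2k(1-s)]^{1/2}D(s)=2(1-s)$ together with the proportionality of the three radicals to $(1-s)^{1/2}$, the derivative simplifies dramatically. After clearing the positive factor $2w$ and substituting $w=(1-s)^{1/2}$ (so $w$ ranges over $[\gamma,1]$ with $\gamma=(2/k)^{1/2}$, and the single surviving radical is $\sqrt{w(w-\gamma)}=[2(k-1)]^{-1/2}[2(k-1)(1-\hhat_k(s))]^{1/2}$), the claim $\delta_k'(s)\ge 0$ becomes $P\ge R$, where $P$ is a quadratic in $w$ and $R=\beta(2w-\gamma)\sqrt{w(w-\gamma)}$ carries the only radical, with $\beta=[2(k-1)]^{1/2}$ and $\alpha=(2k)^{1/2}$. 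Since $P$ is an upward parabola with vertex at $w=\gamma/2<\gamma$, its minimum on $[\gamma,1]$, attained at $w=\gamma$, equals $\beta+\gamma-\alpha$, which is nonnegative because $(1-x)^{1/2}\ge 1-x$ with $x=1/k$. With $P\ge 0$ and $R\ge 0$, the inequality $P\ge R$ is equivalent to $P^2\ge R^2$, and the decisive point is that the quartic difference collapses to $P^2-R^2=\beta A\,w(w-\gamma)+(\beta+\gamma-\alpha)^2$ with $A=4(\beta+\gamma-\alpha)-\beta\gamma^2$. As $w(w-\gamma)\ge 0$ on the relevant range, it suffices to prove $A\ge 0$, which after dividing by $\alpha$ reduces to the elementary inequality $(1-x)^{1/2}(1-x/2)\ge 1-x$ with $x=1/k$; this follows at once from $(1-x/2)^2=1-x+x^2/4\ge 1-x$.

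The main obstacle is the radical $\sqrt{w(w-\gamma)}$ arising from $\hhat_k$ evaluated at the upper limit of integration. A naive estimate such as $\sqrt{w(w-\gamma)}\le w-\gamma/2$ is far too lossy — it already yields a \emph{negative} lower bound for $\delta_k'$ at $k=3$ — so it cannot drive the argument. The workable route is to isolate this single radical, square, and rely on the exact factorization of $P^2-R^2$, at which point the entire lemma rests on the one clean estimate $A\ge 0$.
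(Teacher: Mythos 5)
Your proposal is correct, and while the treatment of $\delta_k(s_k)$ coincides with the paper's (closed-form evaluation, then $y^{1/2}\ge y$ kills the first summand and $((k-1)/k)^{1/2}\le 1$ bounds the second), your monotonicity argument takes a genuinely different route. The paper differentiates in $s$ directly, writes $\delta_k'(s)=[(k-1)(1-s)/2]^{1/2}\,\gamma_k\!\left(1/(k(1-s))\right)$, lower-bounds the term $k[1-(1-k^{-1})^{1/2}]$ by $1/2$, and then proves the surviving one-variable inequality (Lemma \ref{lm:infinite-series-inequality}) by expanding $-[1-(2y)^{1/2}]^{1/2}$ in a fractional power series and verifying the coefficient inequality $2^{1/2}a_j\ge a_{j-1}$ for $j\ge 3$ (Lemma \ref{lm:coefficients-difference}). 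You instead substitute $w=(1-s)^{1/2}$, so that $\delta_k$ becomes a cubic in $w$ plus the single term $\tfrac{2\beta}{3}\,(w(w-\gamma))^{3/2}$, and the claim reduces to $P(w)\ge R(w)$ on $[\gamma,1]$ with $P(w)=2\beta w(w-\gamma)+(\beta+\gamma-\alpha)$ and $R(w)=\beta(2w-\gamma)\sqrt{w(w-\gamma)}$; I verified that $P\ge P(\gamma)=\beta+\gamma-\alpha\ge 0$, that $R\ge 0$, that indeed $P^2-R^2=\beta\big[4(\beta+\gamma-\alpha)-\beta\gamma^2\big]w(w-\gamma)+(\beta+\gamma-\alpha)^2$, and that the bracket is nonnegative because it is $4\alpha\big[(1-x)^{1/2}(1-x/2)-(1-x)\big]$ with $x=1/k$, which follows from $(1-x/2)^2\ge 1-x$. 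Your squaring-and-factoring argument dispenses with the infinite series machinery and the two auxiliary lemmas altogether, yielding a shorter and entirely elementary proof; what the paper's approach buys is that the computation stays in the original variable and the hard step is isolated as a clean standalone inequality, but on balance your route is the more economical one.
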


We prove the monotonicity of $\delta_k$ on $[0, s_k]$ by
showing that the derivative $\delta'_k$ is non-negative on $(0,s_k)$.
For $k \geq 3$ and $s \in [0,s_k]$,
we see from \eqref{eq:h-hat} that the function
$\hhat_k(s) = s + [2k^{-1}(1-s)]^{1/2}$,
and we obtain from \eqref{eq:delta-k} that $\delta_k$
is continuous and differentiable on $(0,s_k)$.
Furthermore, we also have that $0 \leq [k(1-s)]^{-1} \leq 2^{-1}$,
so when we differentiate both sides of \eqref{eq:delta-k}
and rearrange, we obtain that
\begin{equation}\label{eq:delta-k-derivative}
\delta'_k(s) = \left[\frac{(k - 1)(1 - s)}{2}\right]^{1/2} \gamma_k \left( \frac{1}{k(1-s)} \right),
\end{equation}
where $\gamma_k$ is defined for all $y \in [0, 2^{-1}]$ by
$$
\gamma_k(y) = 2
                + y k \left[ 1 - (1 - k^{-1})^{1/2} \right]
                - 2 [ 1 - (2 y )^{1/2}]^{1/2}
                -  ( 2 y )^{1/2}  \left\{  2 - [ 1 -  ( 2 y )^{1/2}]^{1/2}  \right\}.
$$

Of course, here we have that the first factor of \eqref{eq:delta-k-derivative} is non-negative
because $k \geq 1$ and $s \in [0,1]$,
and we prove the non-negativity of the second factor by showing that
$\gamma_k$ is non-negative on the whole interval $[0,2^{-1}]$.
As first step we recall the inequality
$$
\frac{1}{2} =  k \left[ 1 - \bigg( 1 - \frac{1}{2k} \bigg)\right] \leq  k \left[ 1 - (1 - k^{-1})^{1/2} \right],
$$
and we replace the second summand of $\gamma_k$ with this last lower bound.
This then implies that for all $y \in [0, 2^{-1}]$, one has that
$$
 2 + \frac{1}{2}y - 2 [ 1 - (2 y )^{1/2}]^{1/2} -  ( 2 y )^{1/2} \left\{ 2 - [ 1 -  ( 2 y )^{1/2}]^{1/2} \right\}
 \leq \gamma_k(y).
$$
The next lemma confirms that the left-hand side of this last inequality
is non-negative, completing the proof of the monotonicity property in Lemma \ref{lm:monotonicity-delta-k}.

\begin{lemma}\label{lm:infinite-series-inequality}
If $y \in [0, 2^{-1}]$, then one has the inequality
\begin{equation}\label{eq:infinite-series-inequality}
 (2y)^{1/2} \left\{ 2 - \big[1 - (2y)^{1/2} \big]^{1/2} \right\} \leq 2 + \frac{1}{2}y - 2 \big[ 1 - (2y)^{1/2} \big]^{1/2}.
\end{equation}
\end{lemma}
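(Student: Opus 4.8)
The plan is to kill the nested radicals by a two-step substitution and then recognize a perfect square. Writing $u = (2y)^{1/2}$, the hypothesis $y \in [0, 2^{-1}]$ is exactly $u \in [0,1]$, and $y = u^2/2$. A second substitution $w = (1-u)^{1/2} \in [0,1]$, which satisfies the key identity $1 - w^2 = u$, replaces the remaining radical $[1-(2y)^{1/2}]^{1/2}$ by $w$. In these variables the claimed inequality \eqref{eq:infinite-series-inequality} becomes the algebraic relation $u(2-w) \le 2 + u^2/4 - 2w$, where $u$ and $w$ are tied together by $u = 1 - w^2$.

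Next I would move everything to one side and factor. The difference of the right- and left-hand sides is $2 - 2w - 2u + uw + u^2/4$, and the regrouping $2 - 2w - 2u + uw = (2-u)(1-w) - u$, combined with the rewriting $1 - w = u/(1+w)$ of the identity $1 - w^2 = u$, lets me extract a factor of $u$ and then a further factor of $u$. Carrying this out I expect the difference to collapse to $u^2\{ 1/4 - w/(1+w)^2\}$, so that the entire inequality reduces to the single scalar claim $w/(1+w)^2 \le 1/4$ for $w \in [0,1]$.

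The endgame is then immediate: clearing the positive denominator, $w/(1+w)^2 \le 1/4$ is equivalent to $0 \le (1+w)^2 - 4w = (1-w)^2$, which always holds, with equality precisely at $w=1$, i.e. $y = 0$. Hence the difference equals $u^2(1-w)^2/\{4(1+w)^2\} \ge 0$ and the lemma follows. The only genuine obstacle is the algebraic bookkeeping — choosing the substitution that linearizes the outer square root and then spotting the perfect square $(1-w)^2$ at the very end; the name of the lemma suggests that one might instead compare the two sides coefficient by coefficient through their power-series expansions in $(2y)^{1/2}$, but the substitution route sidesteps any convergence or sign-of-coefficient considerations.
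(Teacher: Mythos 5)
Your proof is correct, and it takes a genuinely different route from the paper. The paper expands both sides of \eqref{eq:infinite-series-inequality} as power series in $y^{1/2}$ using the representation $-[1-(2y)^{1/2}]^{1/2} = -1 + \sum_{j\geq 1} a_j y^{j/2}$ and reduces the claim to the coefficient inequality $2^{1/2}a_j - a_{j-1}\geq 0$ for $j\geq 3$, which is handled by a separate lemma via the recursion $a_j = (2j-3)(2^{1/2}j)^{-1}a_{j-1}$. Your substitutions $u=(2y)^{1/2}$ and $w=(1-u)^{1/2}$ instead turn the inequality into the polynomial statement $0\leq 2 - 2w - 2u + uw + u^2/4$ with $u=1-w^2$, and your algebra checks out: using $(2-u)(1-w)-u = -u^2w/(1+w)^2$ one gets that the difference of the two sides equals
\begin{equation*}
u^2\left\{\frac{1}{4}-\frac{w}{(1+w)^2}\right\} \;=\; \frac{u^2(1-w)^2}{4(1+w)^2} \;=\; \frac{(1-w)^4}{4}\;\geq\;0,
\end{equation*}
with equality exactly at $y=0$. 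Your approach is more elementary --- it needs no series expansion, no convergence considerations at the endpoint $y=2^{-1}$, and no auxiliary coefficient lemma --- and it yields an exact closed form for the gap between the two sides, whereas the paper's series argument only certifies its sign. The trade-off is that the paper's coefficientwise comparison is arguably more mechanical to discover, while yours requires spotting the right substitution and the final perfect square.
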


The proof of Lemma \ref{lm:infinite-series-inequality} uses a
infinite series representation of the two quantities that appear
on each side of inequality \eqref{eq:infinite-series-inequality}.
The next lemma represents and intermediate step,
and it isolates an important property of the coefficients
that appear in such series.

\begin{lemma}[Coefficient Difference Inequality]\label{lm:coefficients-difference}
If
$$
a_j = \frac{ (2j)! }{ (2j-1) (j!)^2 2^{3j/2}},
$$
then one has the inequality
$$
2^{1/2} a_j - a_{j-1} \geq 0 \quad \quad \text{for all } j \geq 3.
$$
\end{lemma}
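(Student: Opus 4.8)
The plan is to reduce the claim to a trivial algebraic inequality by computing the ratio $a_j / a_{j-1}$ in closed form. First I would note that for $j \geq 3$ every factor in the definition of $a_{j-1}$ is strictly positive: the factorials are positive and the denominator term $2(j-1)-1 = 2j-3 \geq 3 > 0$. Hence $a_{j-1} > 0$, and the desired inequality $2^{1/2} a_j - a_{j-1} \geq 0$ is equivalent to $2^{1/2}\,(a_j/a_{j-1}) \geq 1$.

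Next I would carry out the ratio computation, treating the factorial part, the squared-factorial part, the odd-denominator part, and the power-of-two part separately. Using $(2j)!/(2j-2)! = (2j)(2j-1)$, $((j-1)!)^2/(j!)^2 = j^{-2}$, the odd-denominator ratio $(2j-3)/(2j-1)$, and $2^{3(j-1)/2}/2^{3j/2} = 2^{-3/2}$, and then cancelling the common factor $(2j-1)$, one obtains
$$
\frac{a_j}{a_{j-1}} = (2j)(2j-1)\cdot\frac{2j-3}{2j-1}\cdot\frac{1}{j^2}\cdot 2^{-3/2}
= \frac{2j-3}{j}\cdot 2^{-1/2}.
$$
Consequently $2^{1/2}\,(a_j/a_{j-1}) = (2j-3)/j$, and the required inequality $(2j-3)/j \geq 1$ is simply $2j-3 \geq j$, i.e. $j \geq 3$. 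Multiplying back through by $a_{j-1} > 0$ then gives
$$
2^{1/2} a_j - a_{j-1} = a_{j-1}\left(\frac{2j-3}{j} - 1\right) = a_{j-1}\cdot\frac{j-3}{j} \geq 0
\quad\text{for all } j \geq 3,
$$
with equality exactly at $j = 3$, which matches the threshold stated in the lemma.

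I expect there to be no substantial obstacle: the whole argument is bookkeeping of the factorial and power-of-two factors in the ratio. The only point that needs a little care is confirming the strict positivity of $a_{j-1}$, which legitimizes dividing through by it and later multiplying back while preserving the direction of the inequality. I would deliberately avoid invoking the infinite-series structure suggested by the surrounding discussion and instead present the single-line ratio identity, since it makes the sharp threshold $j \geq 3$ transparent.
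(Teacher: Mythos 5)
Your proof is correct and follows essentially the same route as the paper: both establish the ratio identity $a_j/a_{j-1} = (2j-3)/(2^{1/2}j)$ and conclude that $2^{1/2}a_j - a_{j-1} = a_{j-1}(j-3)/j \geq 0$ for $j \geq 3$. The only difference is that you spell out the factor-by-factor cancellation explicitly, while the paper states the recursion directly.
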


\begin{proof}
By definition of $a_j$ we have $a_j \geq 0$ for all $j \geq 1$.
Moreover, we also have the recursion
$$
a_j = \left(\frac{2j-3}{2^{1/2} j}\right) a_{j-1}.
$$
It then follows that the difference
$$
2^{1/2} a_j - a_{j-1} = \left(\frac{2j-3}{j} \right) a_{j-1} - a_{j-1} = \left(\frac{j-3}{j}\right) a_{j-1} \geq 0 \quad \text{for all } j \geq 3,
$$
just as needed.
\end{proof}

With the inequality of Lemma \ref{lm:coefficients-difference} at our disposal,
we now carry out the proof of Lemma \ref{lm:infinite-series-inequality}.

\begin{proof}[Proof of Lemma \ref{lm:infinite-series-inequality}]
For $j \geq 1$, we let
$$
a_j = \frac{ (2j)! }{ (2j-1) (j!)^2 2^{3j/2}},
$$
and we note that for $y \in [0,2^{-1}]$ we have the infinite series representation
\begin{equation}\label{eq:infinite-series-representation}
- \big[ 1 - (2y)^{1/2} \big]^{1/2} = -1 + \sum_{j=1}^\infty a_j y^{j/2}.
\end{equation}

The infinite series representation \eqref{eq:infinite-series-representation} and the
fact that $a_1 = 2^{-1/2}$ implies that the left-hand side of \eqref{eq:infinite-series-inequality},
$ \ell = (2y)^{1/2}\left\{ 2 - \big[ 1 - (2y)^{1/2} \big]^{1/2} \right\}$,
can be written as
$$
\ell = (2y)^{1/2} +  y +  \sum_{j=2}^\infty 2^{1/2} a_j y^{(j+1)/2}.
$$
Similarly, the infinite series representation \eqref{eq:infinite-series-representation}
and the estimates $a_1 = 2^{-1/2}$ and $a_2 = 4^{-1}$
tell us that the right-hand side of \eqref{eq:infinite-series-inequality}$, r = 2 + 2^{-1} y - 2 \big[ 1 - (2y)^{1/2} \big]^{1/2}$,
can be expressed as
$$
r =  (2y)^{1/2} + y  + \sum_{j=3}^\infty 2 a_j y^{j/2}.
$$
In turn, one obtains the difference
$$
r-\ell = \sum_{j=3}^\infty 2 a_j y^{j/2} - \sum_{j=2}^\infty 2^{1/2} a_j y^{(j+1)/2} = 2^{1/2}  \sum_{j=3}^\infty \{2^{1/2} a_j - a_{j-1}\} y^{j/2},
$$
and one has that
$
r - \ell \geq 0
$
because of the non-negativity of the differences $\{ 2^{1/2} a_j - a_{j-1}: j \geq 3\}$ established in Lemma \ref{lm:coefficients-difference}.
\end{proof}

The upper bound \eqref{eq:delta-k-bound-conservative} then follows after one notices
that
$$
\delta_k(s_k) =  2 \bigg[ 1 - \left(1-\frac{1}{k}\right)^{1/2} - \frac{1}{k} \bigg] + \frac{4}{3k} \left(\frac{k-1}{ k } \right)^{1/2}.
$$
For any $y \in [0,1]$ we know that $1- (1-y)^{1/2} \leq y$ so we see that the first summand on the right-hand side
is non-positive while the second summand is simply bounded by $4(3k)^{-1}$.
This then completes the proof of the upper bound in Lemma \ref{lm:monotonicity-delta-k}.

\subsection{The Function $\delta_k$ in the Greedy Regime}\label{se:monotonicity-delta-k-greedy}

If $ s \in [s_k, 1]$ for each $k \geq 1$ then the decision maker selects
any feasible observation and the function $\hhat_k(s) = 1$.
Thus, we see from carrying the integration in \eqref{eq:delta-k} that $\delta_k$
can be written as
\begin{equation}\label{eq:delta-k-greedy}
\delta_k(s) = (1-s)^{1/2} \big\{ (2k)^{1/2} - [2(k-1)]^{1/2} \big\} + (1-s)\bigg\{ \frac{1}{3} [2(k-1)(1-s)]^{1/2} -1 \bigg\}.
\end{equation}
We now recall that $s_k = \max\{0, 1-2 k^{-1}\}$ and that for $s \in [s_k, 1]$ one has that
$$
0 \leq 1-s \leq 2k^{-1}.
$$
If we now replace the two factors $(1-s)^{1/2}$ in \eqref{eq:delta-k-greedy}
with their upper bound $(2k^{-1})^{1/2}$ and rearrange, we obtain the inequality
$$
\delta_k(s) \leq 2 \bigg[ 1 - \left( 1 - \frac{1}{k}\right)^{1/2} \bigg] + (1-s)\bigg[ \frac{2}{3}\left(\frac{k-1}{k}\right)^{1/2} -1 \bigg].
$$
Again, for any $y \in [0,1]$ one has that $1- (1-y)^{1/2} \leq y$ so the first summand on the right-hand side is bounded by $2 k^{-1}$.
Moreover, we also see that the second summand is non-positive
and we finally obtain that
$$
\delta_k(s) \leq \frac{2}{k} \quad \quad \text{for all } s \in [s_k,1].
$$
This last bound and inequality \eqref{eq:delta-k-bound-conservative} together
complete the proof of \eqref{eq:delta-k-bound}.

\section{Completion of the Proof of the Lower Bound}\label{se:proof-lower-bound-completion}

Inequality \eqref{eq:delta-k-bound} in Proposition \ref{pr:delta-k-bound}
plays a crucial role in the proof of the lower bound \eqref{eq:main-bounds}.
In fact, it suffices to recall from the definition of the maximal residual \eqref{eq:max-residual}
that
$$
r_k = \max_{0 \leq s \leq 1} \{ [2k(1-s)]^{1/2} - \vhat_k(s) \}
$$
and to write $r_k$ as the telescoping sum
$$
r_k = \sum_{j=1}^k \{r_j - r_{j-1}\}.
$$
The bound \eqref{eq:delta-k-bound} tells us that each summand on the right-hand
side is bounded by $2j^{-1}$ so we obtain that
$$
r_k \leq 2\{ \log(k) + 1\} \quad \quad \text{for all } k \geq 1,
$$
completing the proof of Theorem \ref{th:lower-bound}.

\section{Connections and Observations}\label{se:conclusions}

Policy $\widehat \pi_n$ is a simple adaptive online policy that selects a
monotone increasing subsequence.
It is then easy to see how it could be generalized to the
sequential selection of a unimodal or $d$-modal subsequence considered by \citet{ArlottoSteele:2011}.
For instance, in the unimodal case with $n$ observations,
one could set the turning time $\bar n = \floor{ n/2 }$ and
run policy $\widehat \pi_{\bar n}$ to construct an increasing
segment with the first $\bar n$ observations,
and a decreasing version of $\widehat \pi_{n-\bar n}$
to obtain a decreasing segment over the next $n -\bar n$ observations.
Theorem \ref{th:pi-hat-gap} then would immediately apply to this
construction, proving a $O(\log n)$-optimality gap for the
sequential selection of unimodal and $d$-modal subsequences.

It is also reasonable to expect that policy $\widehat \pi_n$ could generalize to
the sequential selection of coordinatewise increasing subsequences
from a uniform random sample on the $m$-dimensional hypercube.
Thus far, the best optimality-gap estimate comes from the work of \citet{BarGne:AAP2000}
who use a non-adaptive policy to derive a $O(n^{1/(2m+2)})$ bound.
The adaptive character of policy $\widehat \pi_n$ could be fruitful one more time
and help establish a $O(\log n)$-optimality gap for this multidimensional problem.

Many interesting questions remain open, however.
Theorem \ref{th:lower-bound} tells us that
$$
\{2k(1-s)\}^{1/2} - \vhat_k (s) \leq 2 \{ \log( k ) + 1 \}
\quad \quad \text{ for all } s\in [0,1] \text{ and all } k \geq 1,
$$
and it would be worthwhile to understand how the right-hand side changes
with the initial state value $s$.
When $s = 0$ we have from Theorem \ref{th:pi-hat-gap} that
$$
\E[ L_n (\pi^*_n) ] \leq \E[ L_n (\widehat \pi_n) ] + 2\{\log(n) +1\} \quad \quad\text{for all } n \geq 1,
$$
but the actual expected performance of policy $\widehat \pi_n$ seems
to be much tighter.
Based on an extensive numerical analysis\footnote{We estimated numerically the value functions that solve the recursion \eqref{eq:v-hat} and
its optimal counterpart on a discretized state space with a grid size of $10^{-5}$
and with $k$ ranging from $1$ to $10^{4}$.},
we conjecture that there is a constant $0 < c < \infty$ such that
$$
\E[ L_n (\pi^*_n) ] \leq \E[ L_n (\widehat \pi_n) ] + c \quad \quad\text{for all } n \geq 1.
$$
Of course, this would be a substantial improvement and it would
also imply that the functions $\widehat h_n, \widehat h_{n-1}, \ldots, \widehat h_1$
are remarkably close to their analogues that arise when implementing the
optimal dynamic programming algorithm.

Furthermore, it would also be interesting to study in greater detail the difference
$$
g(n) = (2n)^{1/2} - \E[ L_n (\pi^*_n) ].
$$
In the closely related problem in which the sequence $X_1, X_2, \ldots, X_n$
is given by a uniform random permutation of the integers $\{1,2,\ldots,n\}$,
\citet{PeichaoSteele:PAMS2016} showed that the corresponding difference is $O(\log n)$
as $n \rightarrow \infty$.
In our context with independent and identically distributed observations,
the simplest open question is whether $g(n)$ diverges as $n$ grows to infinity,
but its answer is unlikely to be easy.

\section*{Acknowledgement}\label{se:conclusions}

We are grateful to Sa$\check{\rm s}$a Peke$\check{\rm c}$ and J. Michael Steele for
their thoughtful comments on an earlier draft of this manuscript.
This material is based upon work supported by the National Science Foundation
under Grant No. 1553274.

\end{document}